\newtheorem{definition}{Definition}[section]
\theoremstyle{plain}
\newtheorem{theorem}[definition]{Theorem}
\newtheorem{proposition}[definition]{Proposition}
\newtheorem{lemma}[definition]{Lemma}
\newtheorem{question}[definition]{Question}
\newtheorem{corollary}[definition]{Corollary}
\theoremstyle{remark}
\newtheorem{example}[definition]{Example}
\theoremstyle{plain}
\newtheorem{mainthm}{Theorem}
\declaretheorem[numbered=no,name=Theorem]{thm*}
\newcommand{\K}{\ensuremath{\mathbb{K}}}
\newcommand{\D}{\ensuremath{\mathbb{D}}}
\newcommand{\R}{\ensuremath{\mathbb{R}}}
\newcommand{\N}{\ensuremath{\mathbb{N}}}
\newcommand{\T}{\ensuremath{\mathbb{T}}}
\newcommand{\eqdef}{\mathrel{\mathop:}=}
\newcommand\restr[2]{{
		\left.\kern-\nulldelimiterspace#1
		\vphantom{\big|}
		\right|_{#2}
}}
\newcommand{\nre}[1]{\| #1 \| }
\DeclareMathOperator{\spn}{span}
\DeclareMathOperator{\codim}{codim}
\DeclareMathOperator{\supp}{supp}
\begin{document}

	\title{Entropy for compact operators and results on entropy and specification.}
	\author{Paulo Lupatini\footnote{lupatini@ime.unicamp.br}, Felipe Carvalho Silva\footnote{felipe.silva@ime.unicamp.br}, and R\'egis Var\~ao\footnote{varao@unicamp.br}}
  \date{}

  \maketitle
    
  \begin{abstract}
    \noindent We investigate the topological entropy of operators. More precisely, in the Banach space setting, we show that compact operators have finite entropy, which depends solely on their point spectrum. Moreover, for operators on \(F\)-spaces, we explore the relationship between the specification property and entropy. In particular, we show that the specification property implies infinite topological entropy, while the operator specification property implies positive entropy. We also show that the invariance principle fails for the class of compact operators.
        
    \noindent\textbf{Keywords.}	Compact Operators. Entropy. Specification.
	\end{abstract}

  \section{Introduction}

    Classical dynamical systems theory on compact spaces have contributed to the recent growth of linear dynamics \cite{antunes22,bayart2009dynamics,grosse2011linear,lopes21} and in this context a widely used approach to detect complex behavior is topological entropy, introduced by Adler, Konheim and McAndrew in \cite{adler1965}. Entropy is often associated with some degree of chaoticity, as it quantifies the exponential rate at which orbits distinguish themselves from one another over time. A higher rate of growth implies richer dynamical behavior, often suggestive of chaos.

    The concept of entropy appears in various scientific disciplines, generally encapsulating the idea of a measure of chaos. Originally formulated in thermodynamics, entropy was later introduced into mathematics by Claude Elwood Shannon, known as the father of information theory. Shortly thereafter, Andrey Kolmogorov and Yakov Sinai developed the notion of entropy in ergodic theory, which later inspired R. L. Adler, A. G. Konheim, and M. H. McAndrew to define topological entropy for dynamical systems.

    A fundamental problem in entropy theory is the classification of Bernoulli shifts based on their entropy, which is closely linked to the Kolmogorov-Bernoulli equivalence~\cite{ORNSTEIN1970337, PV-springer}. It is a striking fact that any two Bernoulli shifts with the same entropy are isomorphic, implying they share the same fundamental dynamical behavior.

    For non-compact spaces, entropy was defined by Bowen for uniformly continuous maps in \cite{bowen1971}, and this is the definition we will adopt (\Cref{def:entropy}). In the special case when the space is compact, Bowen's definition agrees with the classical notion of topological entropy.
    
    To the best of our knowledge, little research has been conducted on the role of entropy in linear dynamics. Some contributions in this direction include \cite{brian2020linear, brian2017spec,yin2018recurrence}. For instance, the authors of \cite{yin2018recurrence} show that, for certain classes of translation operators, infinite topological entropy and hypercyclicity are equivalent. Additionally, \cite{brian2020linear} proves that, within this class of translation operators, nonzero finite entropy is impossible. In \cite{brian2017spec}, the authors claim that the frequently hypercyclic criterion implies infinite entropy\footnote{Although we believe their result to be true, we remain slightly cautious, as their proof of {\cite[Theorem 3.2]{brian2017spec}} relies on the injectivity of a function \(\Phi\), which is not injective in general.}. Furthermore, F. Bayart, V. M\"uller, and A. Peris provided an example of a hypercyclic operator with zero entropy \cite{peris-personal}.

    In contrast, we show that there exists a class of operators with finite entropy, namely compact operators. We prove that the topological entropy of such operators depends solely on their point spectrum.
    
    \begin{mainthm}\label{thmA}
        Let \(T\) be a compact operator acting on a Banach space \(X\). Then the topological entropy is given by:
        \[h_{top}(T) = \sum_{\{\lambda \in \sigma(T) : |\lambda|> 1\}} \log |\lambda|.\]
    \end{mainthm}

    Interestingly, this result closely resembles Bowen's theorem in \cite{bowen1971} regarding the entropy of endomorphisms on finite-dimensional vector spaces. 
    
    This theorem has significant implications for the applicability of classical entropy principles in the context of operator dynamics. In ergodic theory, the variational principle asserts that the topological entropy of a homeomorphism on a compact space equals the supremum of the metric entropies over all invariant probabilities \cite[Theorem 8.6]{walters2000introduction}. However, \Cref{thmA} and \Cref{propositionA1} implies \Cref{cor:fail_VP} which shows that the variational principle fails for the class of compact operators.

    Another noteworthy consequence of \Cref{thmA} is that compact operators ---despite not being hypercyclic ({\cite[Theorem 5.11]{grosse2011linear}}) or chaotic---can have arbitrarily large entropy. Furthermore, if an operators has a sufficiently large point spectrum (i.e. infinitely many linearly independent eigenvectors), an argument similar to that presented in \Cref{ex:entropiashift} can be used to prove infinite entropy. 
    
    Hence, what is the precise relationship between chaotic behavior and entropy in infinite-dimensional dynamical systems?
    
    A more concrete question:
    
    \begin{question}
        Does every frequently hypercyclic operator have infinite topological entropy?
    \end{question}
        
    Our study of entropy naturally led us to the specification property (SP). This was first introduced by Bowen in the context of Axiom A diffeomorphisms in \cite{bowen1970topological}. Since then, it has played a central role in the study of systems exhibiting hyperbolic behavior (such as Anosov diffeomorphisms) by providing insight into their chaotic and ergodic properties \cite{katok.hasselblatt.book}. Several equivalent formulations of the specification property exist; in this work, we adopt the version given in \Cref{def:SP}, following \cite{sigmund1974dynamical}.

    Informally, a dynamical system has the specification property if, given a finite collection of finite orbit segments, there exists a single periodic point that shadows each segment closely, allowing for a uniformly bounded ``transition time'' between segments. This property has been extensively studied in the setting of compact dynamical systems. Notably, it implies strong dynamical consequences such as the density of periodic points, topological mixing, and positive topological entropy; see \cite{sigmund1974dynamical}. This notion, however, does not always adapt well to operator dynamics. For instance, no operator on a Banach space can satisfy the specification property without violating continuity. Nonetheless, it can be meaningfully defined on certain \(F\)-spaces that are not Banach.

    In the setting of linear dynamics, an analogous concept---known as the Operator Specification Property (OSP)---has been considered for operators on non-compact spaces. Our definition, given in \Cref{def:OSP}, is taken from \cite{bartoll2016operators}. In the same work, the authors prove that the OSP implies several important dynamical properties for linear operators, including mixing, frequent hypercyclicity, and chaos. Moreover, they show that any operator satisfying the Frequent Hypercyclicity Criterion also possesses the Operator Specification Property.

    Our main result regarding these two properties is summarized in the following theorem:
    
    \begin{mainthm}\label{thmB}
    Let \(T\) be an operator on a \(F\)-space, then
        \begin{itemize}
            \item[(i)] if \(T\) has the operator specification property, then \(T\) has positive entropy.
            \item[(ii)] if \(T\) has the specification property, then \(T\) has infinite topological entropy.
        \end{itemize}
    \end{mainthm}
        
    In short, this article explores the interplay between topological entropy, the specification property, the operator specification property, and examines their implications for the dynamics of linear operators.

    \vspace{0,3cm}
    \textit{The paper is structured as follows}: In \Cref{sec:prelim}, we provide the necessary definitions and recall several important known results. In \Cref{sec:proofs}, we prove our main results. Specifically, in \Cref{subsec:thmA}, we prove \Cref{thmA} and \Cref{propositionA1}, which states that compact operators only admit ``simple'' invariant probabilities;  we also prove \Cref{cor:fail_VP}, showing that the invariance principle fails for the class of compact operators. In \Cref{subsec:thmB} we prove \Cref{thmB}. Additionally, \Cref{prop:entropy_WSFS} establishes that weighted shifts on sequence spaces have infinite topological entropy, and \Cref{ex:SP-WeightedShift} presents an example of an \(F\)-space in which a weighted shift exhibits the specification property.
    
	\section{Preliminaries}\label{sec:prelim}

	An \(F\)\textit{-space} is a topological vector space \(X\) equipped with a topology induced by a complete translation-invariant metric. An \(F\)\textit{-norm} is an application defined as \(\nre{\cdot} : X\longrightarrow \R\) that satisfies, for all \(x,y\in X\) and every \(\lambda\in\K\), the following conditions:
	\begin{enumerate}
		\item \(\nre{x+y} \leq \nre{x} + \nre{y}\);
		\item \(\nre{\lambda x} \leq \nre{x}\) if \(|\lambda| < 1\);
		\item \(\lim_{\lambda\to 0} \nre{\lambda x} = 0\);
		\item \(\nre{x} = 0\) implies \(x=0\).
	\end{enumerate}

	A canonical example of an \(F\)-norm is the functional \(\nre{x} \eqdef d(x,0)\), where \(d\) is a translation-invariant metric on a vector space \(X\). This framework generalizes Banach spaces, which correspond to the case where \(d\) is induced by a complete norm. In \Cref{subsec:thmB}, we will work with \(F\)-spaces that admit an \textit{unconditional basis}---that is, a sequence \((e_n)_n\) such that every \(x \in X\) has a unique representation
	\[ x = \sum_{n=1}^\infty a_n e_n \]
	converging unconditionally. Important examples include the spaces \(\ell^p(\mathbb{N})\) for \(1 \leq p < \infty\). For a comprehensive treatment of \(F\)-norms and their role in metric linear spaces, we refer to \cite{rolewicz1985metric,kamthan1981sequence}.

	The \textit{spectrum} of an operator \(T\) on a Banach space \(X\) is defined as the set
	\[
    \sigma(T) = \{ \lambda \in \K : T - \lambda I \text{ is not invertible}\}
    \]
	and the \textit{point spectrum} of \(T\) is the set \(\sigma_p(T)\) of the eigenvalues of \(T\). Notably, the spectrum of an operator is always a nonempty and compact. The \textit{spectral radius} of \(T\) is given by
	\[
    r(T) = \sup\{|\lambda|: \lambda \in\sigma(T)\}.
    \]

	The spectral radius theorem states 
    \[
    r(T) = \inf_{n\in\N} \nre{T^n}^{1/n} = \lim_{n\to\infty} \nre{T^n}^{1/n}.
    \]

	The \textit{nullspace} of an operator \(T\) will be denoted by \(N(T)\) and the \textit{range} of \(T\) by \(R(T)\). For \( \lambda \in \K \), we define
    \[
    N_\lambda(T) \eqdef N(T - \lambda I) \quad \textrm{and} \quad R_\lambda(T) \eqdef R(T - \lambda I).
    \]

	An operator acting on a Banach space \(X\) is \textit{compact} if the image of the closed unit ball is relatively compact (i.e., its closure is a compact set). Compact operators form one of the most well-behaved classes of operators on infinite-dimensional spaces, largely due to their simple spectral properties. This fact is illustrated by the following theorem:

	\begin{theorem}[{\cite[Proposition 15.12]{meise1997introduction}}]\label{thm:compactspectral}
		Let \(T\) be a compact operator acting on a Banach space \(X\). Then the following hold:
		\begin{enumerate}[label=\textup{(\roman*)}, align=left, leftmargin=*, itemsep=0.5ex]
			\item \(0\in\sigma(T)\).
			\item Each \(\lambda\in\sigma(T)\setminus\{0\}\) is an eigenvalue of \(T\) and \(\dim N_\lambda(T)\) is finite.
			\item There is a sequence \({(\lambda_n)}_n\) in \(\K\) such that \(\lim \lambda_n = 0\) and \(\sigma(T) = \{0\}\cup\{\lambda_n:n\in\N\}\).
			\item For every \(\lambda\in\K\setminus\{0\}\) we have \(\dim N_\lambda(T) = \codim R_\lambda(T)\); in particular, the Fredholm alternative holds:\[ T-\lambda I \textrm{ is surjective if, and only if, } T-\lambda I \textrm{ is injective.}\]
		\end{enumerate}
	\end{theorem}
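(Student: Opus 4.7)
The plan is to prove the four statements via the classical Riesz--Schauder theory, in the order \emph{(i)}, the finite-dimensional half of \emph{(ii)}, \emph{(iii)}, \emph{(iv)}, and finally the eigenvalue half of \emph{(ii)}, using Riesz's lemma (the unit ball of an infinite-dimensional Banach space is not relatively compact) as the recurring technical tool.

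First, for \emph{(i)}, I would observe that if $T$ were invertible then $I=T^{-1}T$ would be compact as the composition of a bounded and a compact operator, contradicting Riesz's lemma in the infinite-dimensional space $X$. For the finite dimensionality of $N_\lambda(T)$ in \emph{(ii)}, the restriction $T|_{N_\lambda(T)}$ is compact and equals $\lambda\,\mathrm{id}_{N_\lambda(T)}$; since $\lambda\ne 0$, the identity on $N_\lambda(T)$ is compact, forcing $\dim N_\lambda(T)<\infty$ by the same principle.

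For \emph{(iii)}, I would argue by contradiction: assume there exist infinitely many distinct eigenvalues $(\lambda_n)$ with $|\lambda_n|\geq\varepsilon>0$, pick eigenvectors $x_n$, which are linearly independent, and set $E_n=\spn\{x_1,\ldots,x_n\}$. Riesz's lemma produces $y_n\in E_n$ with $\nre{y_n}=1$ and $\mathrm{dist}(y_n,E_{n-1})\geq 1/2$. A direct computation shows $Ty_n-\lambda_n y_n\in E_{n-1}$ and $T(E_{n-1})\subseteq E_{n-1}$, so for $m<n$ the vector $\lambda_n^{-1}Ty_n-\lambda_m^{-1}Ty_m$ lies in $y_n+E_{n-1}$, hence has norm at least $1/2$. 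Thus $(\lambda_n^{-1}y_n)$ is bounded but its image under $T$ has no convergent subsequence, contradicting compactness.

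The hardest step is \emph{(iv)}, the Fredholm alternative for $S\eqdef T-\lambda I$ with $\lambda\ne 0$. Following Riesz, I would analyze the ascending chain $N(S)\subseteq N(S^2)\subseteq\cdots$ and the descending chain $R(S)\supseteq R(S^2)\supseteq\cdots$; via Riesz's lemma applied to what would otherwise be strictly monotone sequences of closed subspaces on which $T$ acts essentially as the identity, combined with compactness of $T$, both chains stabilize at a common index $p$. This yields the algebraic splitting $X=N(S^p)\oplus R(S^p)$, on whose summands $S$ is nilpotent (and the first summand is finite-dimensional) and bijective respectively. A routine linear-algebra argument on the first summand gives $\dim N(S)=\codim R(S)$; in particular $S$ is injective iff surjective, which for $\lambda\in\sigma(T)\setminus\{0\}$ completes the eigenvalue half of \emph{(ii)}. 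I expect the main obstacle to lie precisely in the stabilization of these chains and the construction of the splitting $X=N(S^p)\oplus R(S^p)$, since this is where compactness of $T$ must be delicately combined with Riesz's lemma and closedness of $R(S^p)$.
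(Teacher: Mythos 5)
The paper offers no proof of this statement---it is quoted verbatim from the cited reference (Meise--Vogt, Proposition 15.12)---so there is no in-paper argument to compare against; your outline is the standard Riesz--Schauder development and is, in substance, exactly the proof found in that reference, with the genuine difficulties (stabilization of the kernel/range chains, closedness of $R(S^p)$, the splitting $X=N(S^p)\oplus R(S^p)$) correctly identified. Two small points to tidy up: part \emph{(i)} requires $\dim X=\infty$ (otherwise the identity is a compact invertible operator, so $0\notin\sigma(I)$), and the full form of \emph{(iii)}, namely $\sigma(T)=\{0\}\cup\{\lambda_n\}$ with the $\lambda_n$ eigenvalues, already uses the Fredholm alternative of \emph{(iv)} together with the open mapping theorem (injective $\Rightarrow$ surjective $\Rightarrow$ invertible, hence every non-eigenvalue $\lambda\ne 0$ is in the resolvent set), so \emph{(iv)} must be established before \emph{(iii)} is concluded---your argument for \emph{(iii)} as written only shows that eigenvalues cannot accumulate away from $0$.
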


	Our analysis will rely crucially on the Riesz Decomposition Theorem, which states:

	\begin{theorem}[{\cite[Theorem B.9]{grosse2011linear}}]\label{thm:rieszdecomp}\index{Riesz Decomposition Theorem}
		Let \(T\) be an operator on the Banach space \(X\). Suppose that the spectrum \(\sigma(T)\) decomposes into two disjoint, nonempty, closed subsets \(\sigma_1\) and \(\sigma_2\), i.e., \(\sigma(T)=\sigma_1\cup\sigma_2\). Then there exist non-trivial \(T\)-invariant closed subspaces \(M_1\) and \(M_2\) of \(X\) such that \(X=M_1\oplus M_2\), and if \(T_i = \restr{T}{M_i}\) for \(i=1,2\) we have \(\sigma(T_1) =\sigma_1\) and \(\sigma(T_2) =\sigma_2\).
	\end{theorem}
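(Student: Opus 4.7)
The plan is to prove the Riesz Decomposition Theorem using the \emph{holomorphic (Dunford-Riesz) functional calculus}, specifically by constructing a bounded projection via a contour integral around $\sigma_1$. Since $\sigma_1, \sigma_2$ are disjoint compact subsets of $\C$, I would first choose a Cauchy domain $U \supset \sigma_1$ whose closure is disjoint from $\sigma_2$; let $\Gamma = \partial U$ be a finite collection of smooth closed curves, positively oriented, lying entirely in the resolvent set $\C \setminus \sigma(T)$.

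Next I would define
\[
P = \frac{1}{2\pi i} \int_{\Gamma} (zI - T)^{-1}\, dz,
\]
where the integrand is continuous and operator-valued, so the integral converges as a Bochner (or Riemann) integral in $\mathcal{L}(X)$. The first key step is to show $P^2 = P$: take a second contour $\Gamma'$ strictly outside $\Gamma$ but still enclosing $\sigma_1$, write the product as a double integral, apply the resolvent identity $(zI-T)^{-1}(wI-T)^{-1} = \frac{1}{w-z}\bigl[(zI-T)^{-1} - (wI-T)^{-1}\bigr]$, and evaluate the inner integrals by Cauchy's theorem — $\int_{\Gamma'} \frac{dw}{w-z} = 2\pi i$ for $z$ inside $\Gamma'$, and $\int_{\Gamma} \frac{dz}{w-z} = 0$ for $w$ outside $\Gamma$. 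That $P$ commutes with $T$ follows from $T(zI-T)^{-1} = (zI-T)^{-1}T$ inside the integral.

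I would then set $M_1 = PX$ and $M_2 = (I-P)X = \ker P$. Both are closed (since $P$ is continuous) and $T$-invariant (since $PT = TP$), and $X = M_1 \oplus M_2$ because $P$ is a projection. Denote $T_i = T|_{M_i}$. To identify the spectra, I would verify that $\sigma(T_1) \subseteq \sigma_1$ by explicitly constructing, for each $\lambda \notin \sigma_1$, a bounded inverse of $\lambda I - T_1$ on $M_1$ via the contour integral $\frac{1}{2\pi i}\int_\Gamma (\lambda - z)^{-1}(zI - T)^{-1}\, dz$ (restricted to $M_1$); analogously for $T_2$ with a contour around $\sigma_2$. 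The reverse inclusions follow because $\sigma(T) = \sigma(T_1) \cup \sigma(T_2)$ (any $\lambda$ in the resolvents of both $T_1$ and $T_2$ gives, via the block-diagonal inverse, a resolvent for $T$). Since $\sigma_1 \neq \emptyset$ and the spectrum of a continuous operator on a nonzero Banach space is nonempty, $M_1 \neq 0$; likewise $M_2 \neq 0$.

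The main technical obstacle is the spectral identification $\sigma(T_i) = \sigma_i$: one has to be careful that the candidate resolvent built by the functional calculus actually maps into the correct invariant subspace and inverts $\lambda I - T_i$ there, which requires checking that $P$ commutes with any operator obtained from a contour integral of $(zI-T)^{-1}$ and that such operators vanish on the complementary subspace. Everything else is essentially bookkeeping with Cauchy's theorem and the resolvent identity.
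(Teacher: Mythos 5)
This statement is quoted in the paper as background (from \cite{grosse2011linear}, Theorem B.9) and is not proved there; your outline is the standard Riesz--Dunford functional calculus argument, which is precisely the proof given in the cited reference, and it is correct. The only point deserving explicit care is the one you already flag: for $\lambda\notin\sigma_1$ the contour must be chosen so that $\lambda$ lies outside the Cauchy domain, after which the candidate resolvent commutes with $P$ and inverts $\lambda I - T_1$ on $M_1$.
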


	We now formally define the key dynamical properties that will be essential for our results.

	\begin{definition}\label{def:entropy}
		Let \((X,d)\) be a metric space and \(T:X\longrightarrow X\) uniformly continuous application. For any compact subset \(K\) of \(X\), given \(n\in\N\) and \(\varepsilon>0\), a subset \(E\) of \(K\)  is called \((n,\varepsilon)\)-separated with respect to \(T\) if for any distinct \(x,y\in E\), we have \(\max_{0\leq i \leq n-1} d(T^i (x),T^i(y)) > \varepsilon\). Define
		\[s_n(\varepsilon,K,T,d) = \max \{\# E: E \text{ is a } (n,\varepsilon)\text{-separated with respect to } T \},
		\]
		and
		\[
		h(T;K) = \lim_{\varepsilon\to 0}\limsup_{n\to\infty} \frac{1}{n} \log s_n(\varepsilon,K,T,d).
		\]
		The topological entropy of \(T\) is defined as
		\[
		h_{top}(T) = h_d(T) = \sup \{ h(T;K) : K \text{ is a compact subset of } X \}.
		\]
	\end{definition}

	Compact subsets are considered to ensure that \(s_n\) is well-defined. For a more detailed discussion of the concept we refer the reader to one of the books~\cite{walters2000introduction,viana2016foundations}. We emphasize that entropy depends on the metric \(d\). When the choice of metric is clear, we denote topological entropy simply by \(h_{top}\). For future reference, note that if \(d\) and \(d^\prime\) are uniformly equivalent metrics on \(X\) and \(T:X\longrightarrow X\) is uniformly continuous then \(h_d(T) = h_{d^\prime}(T)\). We also mention the fact that functions which do not expand distances have zero topological entropy. The proof of these facts can be found in any of the two previous references. For operators on Banach spaces this means that if \(\nre{T}\leq 1\) then \(h_{top}(T) = 0\).

	The following two results are stated due to their particular importance in our analysis.

	\begin{theorem}[{\cite[Theorem 7.10]{walters2000introduction}}]\label{walters:710}
		\begin{enumerate}[label=\text{\roman*}.,align=left]
			\item[]
			\item If \((X,d)\) is a metric space and \(T:X\longrightarrow X\) uniformly continuous application and \(m>0\) then \(h_d(T^m) = m\cdot h_d(T)\).
			\item Let \((X_i,d_i)\), \(i=1,2\), be metric spaces, and let \(T_i:X_i\longrightarrow X_i\) be uniformly continuous. Define a metric \(d\) on \(X_1\times X_2\) by \[d((x_1,x_2),(y_1,y_2)) = \max \{ d_1(x_1,y_1),d_2(x_2,y_2)\}.\]
            Then \(T_1\times T_2\) is uniformly continuous and \(h_d(T_1\times T_2) \leq h_{d_1}(T_1) + h_{d_2}(T_2)\). If either \(X_1\) or \(X_2\) is compact then \(h_d(T_1\times T_2) = h_{d_1}(T_1) + h_{d_2}(T_2)\).
		\end{enumerate}
	\end{theorem}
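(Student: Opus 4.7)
The plan is to treat the two parts separately, working directly with the definition of entropy via $(n,\varepsilon)$-separated sets and, where convenient, its equivalent formulation via $(n,\varepsilon)$-spanning sets.

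For part (i), the inequality $h_d(T^m)\le m\cdot h_d(T)$ is essentially combinatorial: if $E\subseteq K$ is $(n,\varepsilon)$-separated with respect to $T^m$, then two distinct points of $E$ are separated by $\varepsilon$ at some iterate of the form $im$ with $0\le i\le n-1$, so $E$ is automatically $(mn,\varepsilon)$-separated with respect to $T$. Hence $s_n(\varepsilon,K,T^m,d)\le s_{mn}(\varepsilon,K,T,d)$, and dividing by $n=\tfrac{mn}{m}$ and taking the appropriate limits yields the bound. For the reverse inequality $h_d(T^m)\ge m\cdot h_d(T)$, the uniform continuity of $T$ is the key: given $\varepsilon>0$, choose $\delta\in(0,\varepsilon)$ so that $d(x,y)<\delta$ implies $d(T^j x,T^j y)<\varepsilon$ for every $j=0,1,\dots,m-1$. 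Then any $(mn,\varepsilon)$-separated set for $T$ is, contrapositively, $(n,\delta)$-separated for $T^m$, because otherwise all blocks $d(T^{im}x,T^{im}y)<\delta$ would propagate through the intermediate iterates to contradict $\varepsilon$-separation under $T$. This yields $s_{mn}(\varepsilon,K,T,d)\le s_n(\delta,K,T^m,d)$, and combining both inequalities establishes the identity.

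For part (ii), the upper bound is cleanest through spanning sets: if $F_i\subseteq K_i$ is $(n,\varepsilon)$-spanning for $T_i$, then $F_1\times F_2$ is $(n,\varepsilon)$-spanning for $T_1\times T_2$ on $K_1\times K_2$ in the max metric, so minimal spanning cardinalities multiply, and taking logs, dividing by $n$, and letting $\varepsilon\to 0$ gives $h(T_1\times T_2;K_1\times K_2)\le h(T_1;K_1)+h(T_2;K_2)$. Since any compact $K\subseteq X_1\times X_2$ sits inside $\pi_1(K)\times\pi_2(K)$ with both projections compact, monotonicity of the entropy in the compact set supplies the general bound $h_d(T_1\times T_2)\le h_1(T_1)+h_2(T_2)$.

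For the equality under the extra hypothesis that, say, $X_1$ is compact, the lower bound is obtained by products of separated sets: if $E_i\subseteq K_i$ is $(n,\varepsilon)$-separated for $T_i$, then $E_1\times E_2$ is $(n,\varepsilon)$-separated for $T_1\times T_2$, so $s_n(\varepsilon,K_1\times K_2,T_1\times T_2,d)\ge s_n(\varepsilon,K_1,T_1,d_1)\cdot s_n(\varepsilon,K_2,T_2,d_2)$. Taking $K_1=X_1$, allowed because $X_1$ is compact so $h(T_1;X_1)=h_1(T_1)$, and letting $K_2$ range over compact subsets of $X_2$, we get $h_d(T_1\times T_2)\ge h_1(T_1)+h_2(T_2)$, matching the upper bound. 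The main delicate point in the whole argument is the reverse direction of part (i), where uniform continuity is used to convert an $m$-step gap into single steps without losing the separation threshold; the rest of the proof is a careful but standard manipulation of separated/spanning sets under products.
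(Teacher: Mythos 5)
First, a framing remark: the paper does not prove this statement at all --- it is quoted from Walters (Theorem 7.10) and used as a black box --- so there is no in-paper proof to compare against, and your sketch has to be judged on its own terms. Part (i) is correct: an $(n,\varepsilon)$-separated set for $T^m$ is $(mn,\varepsilon)$-separated for $T$, and uniform continuity converts $(mn,\varepsilon)$-separation for $T$ into $(n,\delta)$-separation for $T^m$; the only detail you suppress is that $\limsup_N \frac{1}{N}\log s_N$ agrees with its value along the subsequence $N=mn$, which follows from monotonicity of $s_N$ in $N$. The upper bound in part (ii), via products of spanning sets and $K\subseteq \pi_1(K)\times\pi_2(K)$, is also fine.

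The genuine gap is in the lower bound of part (ii), where you have mislocated the role of compactness. The counting inequality $s_n(\varepsilon,X_1\times K_2,T_1\times T_2,d)\ge s_n(\varepsilon,X_1,T_1,d_1)\cdot s_n(\varepsilon,K_2,T_2,d_2)$ is correct, but the entropy in \Cref{def:entropy} is a $\limsup$ in $n$, and $\limsup_n(a_n+b_n)\ge \limsup_n a_n+\limsup_n b_n$ is false in general; one only gets $\limsup_n(a_n+b_n)\ge \liminf_n a_n+\limsup_n b_n$. So the step ``taking logs \dots we get $h_d(T_1\times T_2)\ge h_1(T_1)+h_2(T_2)$'' does not follow as written. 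Note also that if compactness were needed only ``so that $h(T_1;X_1)=h_1(T_1)$,'' the hypothesis would be superfluous: without it one could still take suprema over compact $K_1$ and run the same product argument. The actual point is that for a continuous map of a \emph{compact} metric space the $\liminf$ and $\limsup$ versions of $\lim_{\varepsilon\to 0}\frac{1}{n}\log s_n(\varepsilon,X_1,T_1)$ coincide (both equal the open-cover entropy), so the $\liminf$ term in the correct inequality still tends to $h_1(T_1)$ as $\varepsilon\to 0$. With that one additional fact your argument closes and coincides with Walters' original proof; without it, the final step is unjustified.
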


	\begin{theorem}[{\cite[Theorem 8.14]{walters2000introduction}}]\label{walters:teo814}
		Let \(V\) be a \(p\)-dimensional vector space and \(A:V\longrightarrow V\) be linear and \(\rho\) a metric induced by a norm on \(V\). Then
		\[ h_\rho(A) = \sum_{\{i : |\lambda_i|>1\}} \log |\lambda_i|, \]
		where \(\lambda_1,\dots,\lambda_p\) are the eigenvalues of \(A\).
	\end{theorem}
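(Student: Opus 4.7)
The plan is to split $V$ into $A$-invariant subspaces indexed by eigenvalue modulus and handle each piece separately. Because all norms on the finite-dimensional $V$ are equivalent, and hence induce uniformly equivalent metrics, the remark preceding \Cref{walters:710} ensures that $h_\rho(A)$ is independent of $\rho$, so I may pick whichever norm is convenient for each sub-computation. Using the generalized-eigenspace decomposition (grouping complex conjugate pairs when $\K=\R$), I write $V = V_s \oplus V_c \oplus V_u$ with the three summands $A$-invariant and the spectrum of $A_\ast := \restr{A}{V_\ast}$ lying strictly inside, on, or strictly outside the unit circle respectively. Equip $V$ with the max norm relative to this splitting, so that $A = A_s \times A_c \times A_u$ as a map of a product of metric spaces.

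The upper bound $h_\rho(A) \leq h(A_s) + h(A_c) + h(A_u)$ follows from two applications of the inequality in \Cref{walters:710}(ii) (no compactness needed). For a lower bound, observe that any $(n,\varepsilon)$-separated set for $A_u$ inside a compact $K_u \subset V_u$ remains compact and $(n,\varepsilon)$-separated in $V$ under the inclusion $V_u\hookrightarrow V$, so $h_\rho(A) \geq h(A_u)$. It therefore suffices to prove $h(A_s) = h(A_c) = 0$ and $h(A_u) = \sum_{|\lambda_i|>1}\log|\lambda_i|$.

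For $A_s$, the spectral radius formula gives $\lim \nre{A_s^n}^{1/n} < 1$, so an adapted norm makes $A_s$ a strict contraction and a fortiori non-expanding; hence $h(A_s)=0$ by the fact recorded in \Cref{sec:prelim}. For $A_c$, an adapted norm yields polynomial growth $\nre{A_c^n} \leq C(1+n)^{\dim V_c}$ coming from Jordan blocks on the unit circle, so each Bowen ball $B_n(0,\varepsilon)$ contains an ordinary ball of radius $\varepsilon/C(1+n)^{\dim V_c}$, forcing $s_n(\varepsilon,K,A_c)$ to grow only polynomially in $n$ for every compact $K$; thus $h(A_c)=0$. For $A_u$, choose an adapted basis in which $A_u$ is diagonal up to a small nilpotent perturbation; the Bowen ball at scale $\varepsilon$ and time $n$ is, up to constants absorbed in $\varepsilon$, a box with side $\sim\varepsilon|\lambda_j|^{-n}$ along each generalized eigendirection with $|\lambda_j|>1$. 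A volume comparison with any fixed compact $K\subset V_u$ gives $s_n(\varepsilon,K,A_u)\leq C(\varepsilon)\prod_{|\lambda_j|>1}|\lambda_j|^{n}$, and a regular-lattice construction inside an unstable box realizes a matching lower bound, so $h(A_u)=\sum_{|\lambda_i|>1}\log|\lambda_i|$.

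The main obstacle is controlling Jordan blocks of size larger than one: those on the unit circle must contribute nothing to entropy despite polynomial blow-up of $\nre{A_c^n}$, and those outside it must give exactly $\log|\lambda_j|$ rather than any strictly larger rate. The standard fix is the adapted-norm trick: for every $\delta>0$ one can rescale the generalized eigenbasis so that the matrix of $A$ differs from its diagonal part by a nilpotent of norm at most $\delta$. The polynomial Jordan corrections then wash out upon taking $\frac{1}{n}\log(\cdot)$ and letting $n\to\infty$, leaving only the eigenvalue contributions $\log|\lambda_j|$ from the expanding directions.
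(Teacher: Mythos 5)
This statement is imported from Walters (Theorem 8.14) and the paper gives no proof of its own, so there is no internal argument to compare yours against; I can only assess your proposal on its merits, and it is essentially the standard textbook proof, correctly organized. The splitting $V=V_s\oplus V_c\oplus V_u$ by eigenvalue modulus, the upper bound $h(A)\le h(A_s)+h(A_c)+h(A_u)$ from two applications of \Cref{walters:710}(ii), the lower bound $h(A)\ge h(A_u)$ by restriction to an invariant subspace (the same mechanism as \Cref{lemma:entropysub}), and the adapted-norm handling of Jordan blocks are exactly the right ingredients, and the norm-independence you invoke is justified since all norms on a finite-dimensional space are Lipschitz equivalent. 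Two points would need to be written out carefully in a full proof, though neither is a gap. First, for $A_c$, passing from ``each Bowen ball contains an ordinary ball of radius $\varepsilon/C(1+n)^{\dim V_c}$'' to a polynomial bound on $s_n$ requires the packing step: the half-radius Bowen balls around a $(n,\varepsilon)$-separated set are pairwise disjoint and all lie in a fixed bounded neighborhood of $K$, so a volume comparison bounds their number. Second, for $A_u$ the claim that the Bowen ball ``is'' a box with sides $\varepsilon|\lambda_j|^{-n}$ is only true up to multiplicative errors of order $(|\lambda_j|\pm\delta)^{n}$ coming from the nilpotent part; one must run the volume comparison with $|\lambda_j|-\delta$ for the lattice lower bound and $|\lambda_j|+\delta$ for the covering upper bound and let $\delta\to 0$ at the very end, which your closing paragraph correctly identifies as the adapted-norm trick. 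Note also that the naive bound ``Bowen ball contains a round ball of radius $\varepsilon\|A_u^{n}\|^{-1}$'' would only yield $h(A_u)\le (\dim V_u)\log\max_j|\lambda_j|$, so the anisotropic box structure is genuinely needed there; you use it, which is right.
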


    \begin{definition}\label{def:SP}
		Let \((X,d)\) be a metric space and \(f:X \longrightarrow X\) a continuous function. A function \(f\) is said to have the specification property if, for every \(\varepsilon>0\), there exists \(N\in\N\) such that for any \(s \in \N\), any points \(y_{1} , \ldots , y_{s} \in X\), and any integers \(0 \leq a_{1} \leq b_{1} < a_{2} \leq b_{2} < \cdots < a_{s} \leq b_{s}\) which satisfy \(a_{i+1} - b_{i} \geq N\) for \(i=1,\ldots,s-1\), there exists a point \(x \in X\) which is fixed by \(f^{N+b_s}\) and, for each \(i=1,\ldots,s\) and all integers \(n\) with \(a_i \leq n \leq b_i\), we have \(d(f^n(x),f^n(y_i))<\varepsilon\).
	\end{definition}

	It can be easily verified that any linear transformation on a Banach space satisfying \Cref{def:SP} is an unbounded operator. The following definition provides a natural extension of the specification property to the operator setting.

	\begin{definition}\label{def:OSP}
		An operator \(T\) on a separable \(F\)-space \(X\) is said to have the operator specification property if there exists an increasing sequence \({(K_n)}_n\) of \(T\)-invariant sets with \(0\in K_1\) and \[ \overline{\bigcup_{n=1}^{\infty} K_n} = X\] such that for every \(n\in\N\) the map \(\restr{T}{K_n}\) has the specification property.
	\end{definition}

	Observe that both the specification property and the operator specification property imply density of periodic points.

	An important class of operators in our analysis consists of weighted shifts on \(F\)-spaces. Let us recall their precise definition: given a sequence \(w = (w_n)_{n\in\mathbb{N}}\) of nonzero scalars, the \textit{backward weighted shift} \(B_w\) on an \(F\)-space with unconditional basis \((e_n)_{n \in \mathbb{N}}\) is defined by
	\[
	B_w e_n = w_n e_{n-1} \quad \text{(with the convention } e_0 \eqdef 0\text{)}.
	\]
	The corresponding \textit{forward weighted shift} operator \(F_w\) is given by
	\[
	F_w e_n = w_n e_{n+1}.
	\]
	These operators play a fundamental role in the study of linear dynamics on sequence spaces.

	\section{Proof of the results}\label{sec:proofs}

	In this section we prove our main results.

	\subsection{The Proof of Theorem A}\label{subsec:thmA}

	We begin this subsection by establishing several lemmas needed for the proof of \Cref{thmA}, followed by an analysis of entropy calculations in specific examples. Throughout this section, we restrict our attention to operators acting on complex Banach spaces.

	\begin{lemma}\label{lemma:entropycompact}
		Let \(T\) be an operator acting on a Banach space \(X=M_1\oplus M_2\) where \(M_1\) and \(M_2\) are \(T\)-invariant subsets of \(X\). If \(d\) denotes the metric induced by the norm of \(X\) then \(h_d(T)\leq h_d(\restr{T}{M_1}) + h_d(\restr{T}{M_2})\).
	\end{lemma}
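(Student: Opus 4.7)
The plan is to realize $T$ as conjugate to the product map $S = T|_{M_1} \times T|_{M_2}$ on $M_1 \times M_2$, and then invoke Walters' Theorem 7.10(ii), which gives $h(T_1 \times T_2) \le h(T_1) + h(T_2)$ for the product metric. All the work lies in checking that the conjugacy is uniform enough to preserve entropy.

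First I would define $\Psi \colon M_1 \times M_2 \longrightarrow X$ by $\Psi(x_1,x_2) = x_1 + x_2$. Since $X = M_1 \oplus M_2$ with both summands closed (this is implicit in the Banach-space direct sum, and is guaranteed in the intended application via \Cref{thm:rieszdecomp}), $\Psi$ is a continuous linear bijection, and by the open mapping theorem $\Psi^{-1}$ is continuous as well. In particular the projections $P_i\colon X \to M_i$ are bounded. If I endow $M_1 \times M_2$ with the product max-metric $d_p$ as in \Cref{walters:710}(ii), then the estimates
\[
\|\Psi(x_1,x_2) - \Psi(y_1,y_2)\| \le \|x_1-y_1\| + \|x_2-y_2\| \le 2\, d_p,
\]
\[
d_p\bigl(\Psi^{-1}(z),\Psi^{-1}(w)\bigr) \le \max\{\|P_1\|,\|P_2\|\}\cdot\|z-w\|
\]
show that $\Psi$ is bi-Lipschitz.

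Next I would verify the conjugacy $T\circ\Psi = \Psi\circ S$, which is immediate from the linearity of $T$ and $T$-invariance of $M_1,M_2$. Since $\Psi$ is a bi-Lipschitz homeomorphism, it carries compact subsets of $M_1\times M_2$ to compact subsets of $X$ and vice-versa, and an $(n,\varepsilon)$-separated set under $T$ on $\Psi(K)$ pulls back to an $(n, \varepsilon/L)$-separated set under $S$ on $K$, where $L$ is the Lipschitz constant of $\Psi^{-1}$ (and symmetrically in the other direction). Taking $\limsup$ in $n$ and then $\varepsilon \to 0$ yields $h_d(T) = h_{d_p}(S)$. This is precisely the standard fact that topological entropy is invariant under uniformly continuous conjugacies with uniformly continuous inverse.

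Finally, applying \Cref{walters:710}(ii) directly to $S = T|_{M_1} \times T|_{M_2}$ gives $h_{d_p}(S) \le h_d(T|_{M_1}) + h_d(T|_{M_2})$, and combining this with the previous equality proves the lemma. The only mild obstacle is keeping track of the metrics, making sure the restriction of the norm of $X$ to each $M_i$ is the same metric used in $h_d(T|_{M_i})$, and that the pullback of $d$ via $\Psi$ is uniformly equivalent to $d_p$; both follow from the Lipschitz bounds above.
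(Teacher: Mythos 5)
Your proposal is correct and follows essentially the same route as the paper: both conjugate $T$ to $T|_{M_1}\times T|_{M_2}$ via the addition map $(x_1,x_2)\mapsto x_1+x_2$, use the open mapping theorem to control the inverse, and finish with \Cref{walters:710}(ii). The only cosmetic difference is that the paper works with the sum metric on the product (so the needed inequality between separated sets comes from the triangle inequality with the same $\varepsilon$) and only proves the one-sided inequality it needs, whereas you establish the full bi-Lipschitz conjugacy and hence equality of entropies before applying the product bound.
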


	\begin{proof}
		For \(i=1\) or \(2\) denote the operator \(\restr{T}{M_i}\) by \(T_i\), and consider  \(T_1\times T_2 (x,y) = (T_1 x, T_2y)\). The application \(\Phi: M_1\times M_2 \longrightarrow M_1\oplus M_2\) defined by \(\Phi(x,y) = x+y\) is an algebraic isomorphism. Consider \(\rho\) a metric on \( M_1\times M_2 \) given by \(\rho((x_1,x_2),(y_1,y_2)) = \nre{x_1-y_1} + \nre{x_2-y_2}\). Note that \(\Phi\) is continuous and, by the open map theorem, a homeomorphism.

		Given \(n\in\N\) and \(\varepsilon>0\), let \(K=\varPhi(L)\) be a compact subset of \(X\). We prove that if \(E\subset K\) is a \((n,\varepsilon)\)-separated set with respect to \(T\), then \(F = \Phi^{-1}(E) \subset L\) is a \((n,\varepsilon)\)-separated set with respect to  \(T_1 \times T_2\). Given distinct points \(x=(x_1,x_2),y=(y_1,y_2)\in F\) for \(i=0,1,\dots,n-1\),
		\begin{align*}
			\rho({(T_1 \times T_2)}^i(x),{(T_1 \times T_2)}^i(y)) & = \rho( (T_1^i(x_1),T_2^i(x_2)),(T_1^i(y_1),T_2^i(y_2))) \\
			& = \nre{T_1^i(x_1) - T_1^i(y_1)} + \nre{T_2^i(x_2)-T_2^i(y_2)} \\
			& \geq \nre{T^i(x_1+x_2)-T^i(y_1+y_2)} >\varepsilon.
		\end{align*}

		Hence any \(E\subset K\) \((n,\varepsilon)\)-separated set with respect to \(T\) induces an \(F\subset L\) \((n,\varepsilon)\)-separated set with respect to \(T_1\times T_2\) of the same cardinality. This yields 
        \[
        s_n(\varepsilon,K,T,d) \leq s_n(\varepsilon,L, T_1\times T_2,\rho).
        \]
		Applying limits on both sides of the inequality (according to \Cref{def:entropy}) yields 
        \[
        h_d(T;K) \leq h_\rho(T_1\times T_2 ; L).
        \] 
        As \(\Phi\) is a homeomorphism, taking the supremum over compact sets, the previous inequality implies that \(h_d(T)\leq h_\rho(T_1\times T_2)\). As \(\rho\) is uniformly equivalent to the metric \(\rho^\prime\) defined by \(\rho^\prime((x_1,x_2),(y_1,y_2)) =  \max\{ \nre{x_1-y_1}, \nre{x_2-y_2}\}\), \Cref{walters:710} implies the result.
	\end{proof}

	\begin{lemma}\label{lemma:spectrumcontraction}
		If \(\sigma(T)\subset \D\), then there exists \(n\in\N\) such that \(T^n\) is a contraction. In particular, \(h_{top}(T) = 0\).
	\end{lemma}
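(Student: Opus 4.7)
The plan is to reduce the statement to the spectral radius formula and then invoke the earlier remark that norm-non-expanding maps have zero entropy.

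First I would observe that the hypothesis $\sigma(T) \subset \D$ (the open unit disk) means precisely that $r(T) < 1$. By the spectral radius theorem (stated in the preliminaries), $r(T) = \lim_{n \to \infty} \|T^n\|^{1/n}$, so I can fix any $\rho$ with $r(T) < \rho < 1$ and conclude that $\|T^n\|^{1/n} < \rho$ for all sufficiently large $n$. For such $n$, we have $\|T^n\| < \rho^n < 1$, which gives $T^n$ as a (strict) contraction. This proves the first assertion.

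For the entropy statement, I would pick $n$ as above so that $\|T^n\| < 1$. Then $T^n$ does not expand distances with respect to the metric $d(x,y) = \|x - y\|$, since
\[
d(T^n x, T^n y) = \|T^n(x - y)\| \leq \|T^n\|\, \|x-y\| \leq \|x-y\|.
\]
By the fact recalled just after \Cref{def:entropy} (maps that do not expand distances have zero topological entropy), $h_{top}(T^n) = 0$. Finally, by \Cref{walters:710}(i), $h_{top}(T^n) = n \cdot h_{top}(T)$, hence $h_{top}(T) = 0$.

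There is no real obstacle here; the only thing to be careful about is that $\D$ denotes the \emph{open} unit disk, so $\sigma(T) \subset \D$ combined with the compactness of $\sigma(T)$ genuinely yields $r(T) < 1$ (strict inequality), which is what allows one to pick $\rho < 1$ strictly larger than $r(T)$.
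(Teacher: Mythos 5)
Your proof is correct and follows essentially the same route as the paper's: both use the spectral radius formula to get $\|T^n\|<1$ for large $n$ (the paper via an $\varepsilon$ with $r(T)+\varepsilon<1-\varepsilon$, you via an intermediate $\rho$, which is if anything slightly cleaner), and both then combine the zero-entropy property of non-expanding maps with $h_{top}(T^n)=n\cdot h_{top}(T)$ from \Cref{walters:710}. No gaps.
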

	\begin{proof}
		By the spectral radius formula and the compactness of \(\sigma(T)\), it follows that \(r(T) < 1\). Choose \(\varepsilon>0\) so that \(r(T) + \varepsilon < 1 - \varepsilon < 1\). Then, there exists a \(N\in\N\) such that for all \(n\geq N\), the following holds:
		\[
		r(T) - \varepsilon < \nre{T^n}^{1/n} < r(T) + \varepsilon < 1 - \varepsilon.
		\]
		Therefore, there is a constant \(M>0\) such that for all \(x \in X\) and all \(n\in\N\)
		\[
		\nre{T^n x}\leq M{( 1 - \varepsilon)}^n\nre{x}.
		\]
		For sufficiently large \(n\), \(\nre{T^n} < 1\). Thus \(T^n\) is a contraction. Since a contraction has zero entropy, it follows from \Cref{walters:710} that
		\[h_{top}(T) = \frac{1}{n}h_{top}(T^n) = 0.\]
	\end{proof}
	
	\begin{lemma}\label{lemma:entropysub}
		Let \(T:X\longrightarrow X\) be an operator. If \(V\) is a \(T\)-invariant subspace of \(X\) then \(h_{top}(T) \geq h_{top}(\restr{T}{V})\).
	\end{lemma}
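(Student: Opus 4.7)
The plan is to reduce the inequality to a direct comparison of the quantities $s_n(\varepsilon, K, \cdot, d)$ on compact subsets of $V$, exploiting the fact that $V$ inherits its metric from $X$ and that $T$-orbits of points in $V$ stay in $V$.

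First I would fix any compact set $K\subset V$. Since $V$ is a subset of $X$, $K$ is automatically a compact subset of $X$, and the metric used to measure separation is the same whether $K$ is viewed as sitting inside $V$ or inside $X$. Because $V$ is $T$-invariant, for every $x\in K$ and every $i\geq 0$ we have $T^i(x)=(\restr{T}{V})^i(x)\in V$, so the quantities
\[
\max_{0\leq i\leq n-1} d(T^i(x),T^i(y)) \quad \text{and} \quad \max_{0\leq i\leq n-1} d((\restr{T}{V})^i(x),(\restr{T}{V})^i(y))
\]
coincide for all $x,y\in K$. Therefore a subset $E\subset K$ is $(n,\varepsilon)$-separated with respect to $T$ if and only if it is $(n,\varepsilon)$-separated with respect to $\restr{T}{V}$, which gives
\[
s_n(\varepsilon,K,T,d) = s_n(\varepsilon,K,\restr{T}{V},d).
\]

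Passing to the limit in $n$ and then in $\varepsilon$ as in \Cref{def:entropy} yields $h(T;K)=h(\restr{T}{V};K)$ for every compact $K\subset V$. Taking the supremum of the right-hand side over all compact subsets of $V$ gives $h_{top}(\restr{T}{V})$, while on the left-hand side the same supremum is bounded above by $h_{top}(T)$, which is the supremum of $h(T;K)$ over the larger collection of compact subsets of $X$. This produces the desired inequality. There is no substantial obstacle here; the only thing to verify carefully is that the definition of $s_n$ truly depends only on the metric $d$ and on the orbit $\{T^i(x)\}_{0\leq i\leq n-1}$, both of which are intrinsic to the subset $K$ and therefore agnostic to whether one regards $T$ or $\restr{T}{V}$ as the underlying map.
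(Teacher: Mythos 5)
Your proposal is correct and follows essentially the same route as the paper's proof: both identify $(n,\varepsilon)$-separated sets for $T$ and for $\restr{T}{V}$ on compact subsets of $V$ via the induced metric and $T$-invariance, conclude $h(T;K)=h(\restr{T}{V};K)$, and then compare suprema over the two collections of compact sets. No gaps.
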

	\begin{proof}
		In \(V\) we consider the induced metric from \(X\). Given \(\varepsilon>0\), \(n\in\N\) and \(K\) a compact subset of \(V\). Since the induced metric coincides with the original one in \(V\), a \(E\subset K\) is \((n,\varepsilon)\)-separeted set with respect to \(\restr{T}{V}\) if and only if it is also a \((n,\varepsilon)\)-separeted set with respect to \(T\), then, \(s_n(E,K,\restr{T}{V},d) = s_n(E,K,T,d)\). Hence \(h(T_V;K) = h(T;K)\). Consequently,
		\[
		h_{top}(T_V) = \sup_{K\subset V} h(T_V;K) \leq  \sup_{K\subset X} h(T;K) = h_{top}(T).
		\]
	\end{proof}

	As a consequence of \Cref{lemma:entropysub}, we obtain that if an operator has an eigenvalue of modulus greater than one, then it has positive entropy.

	\begin{proof}[Proof of \Cref{thmA}]
		Split the spectrum into the compact disjoint sets \(\sigma_1 = \{\lambda\in\sigma(T) : |\lambda| \leq 1\}\) and \(\sigma_2 = \{\lambda\in\sigma(T): |\lambda| > 1 \}\). If \(\sigma_2\) is empty, \Cref{lemma:spectrumcontraction} implies \(h_d(T) = 0\) and we are done. Suppose that \(\sigma_2\) is not empty. By Riesz's Theorem \ref{thm:rieszdecomp} we can decompose \(X\) into a direct sum of \(T\)-invariant subspaces \(M_1\) and \(M_2\). Denote the operators \(\restr{T}{M_i}\) by \(T_i\), by \Cref{lemma:entropycompact} we have \[ \max\{h_d(T_1),h_d(T_2)\} \leq h_d(T) \leq h_d(T_1) + h_d(T_2). \]
		Again, \Cref{lemma:spectrumcontraction} implies \(h_d(T_1) = 0\). Therefore, \(h_d(T) = h_d(T_2)\). Clearly, \(T_2\) is a compact operator and \(0\notin \sigma(T_2) = \sigma_2\). Hence \(M_2\) is finite-dimensional. By \Cref{walters:teo814} we have
		\[
		h_{top}(T) = h_d(T) = \sum_{\{i : |\lambda_i|>1\}} \log |\lambda_i|
		\]
	\end{proof}

	\begin{example}
		If \(T\) is a hyperbolic operator on a Banach space \(X\), meaning that \(\sigma(T)\cap\T =\varnothing\), it is well known we can find a splitting \((T,X)=(T_s\oplus T_u, X_s\oplus X_u)\) and a norm \(\nre{\cdot}\) on \(X\) such that \(\nre{T_s}<1\). The same argument used in \Cref{thmA} can be used to prove that \(h_{top}(T) = h_{top}(T_u)\).
	\end{example}

	In the following example, similarly to \Cref{prop:entropy_WSFS}, we compute the topological entropy of a weighted shift on a Banach space.

	\begin{example}\label{ex:entropiashift}
		For \(|\alpha| > 1\), the Rolewicz operator \(T=\alpha B:X\longrightarrow X\), where \(X = \ell^p(\N)\) and \( 1\leq p < \infty\) has infinite topological entropy. The sequence \(x_{\lambda} = {( \lambda^n / \alpha^n )}_n \in\ell^p(\N)\) is a scalar multiple of an eigenvector if and only if \(|\lambda/\alpha|<1\). Clearly, for \(|\lambda|< |\alpha|\) we have \(Tx_\lambda = \lambda x_\lambda\), which implies \(\alpha\D \subset \sigma_p(T)\). We can, in fact, show equality. By the spectral radius formula and the compactness of the spectrum, we have \(\sigma(T) = \overline{\alpha\D}\). Thus, it suffices to show that no complex number \(\lambda\) with \(|\lambda| = |\alpha| \) belongs to \(\sigma_p(T)\). Suppose that there is such complex number, meaning there is a nonzero \(y\) such that \(Ty=\lambda y\). Then \(\alpha y_{n+1} = \lambda y_n\) for every \(n\in\N\), hence \( y = y_1{(\lambda ^n/ \alpha^n)}_n\). A simple calculation shows \(y\notin X\). Which implies \(\alpha\D = \sigma_p(T)\).

		Choose \(1<r<|\alpha|\) and let \(\lambda_1,\dots,\lambda_n\) distinct points in the circle \(r \T \subset \sigma_p(T)\). Consider \(V = \spn \{ x_{\lambda_i};i=1,\dots,n \}\). Note \(V\) is a \(n\)-dimensional \(T\)-invariant subspace. Clearly \(h_{top}(\restr{T}{V}) = n \log r\). Since \(n\) is arbitrary, it follows that \(h_{top}(T) = \infty\).
	\end{example}

	For the next result, recall the \textit{support} of a Borel measure \(\mu\) on a topological vector space \(X\) is defined as
	\[\supp \mu = \{x \in X : \forall V\in \mathcal{V}(x), \mu(V)>0\}.\]
	where \(\mathcal{V}(x)\) is a local basis of open neighborhoods of \(x\). If \(A\) is a linear operator on a finite-dimensional normed vector space, its \textit{mini-norm} is given by
	\[ m(A)=\inf \{\nre{Ax}:\nre{x}=1\}.\]

	\begin{lemma}\label{lemma:mininorm}
		If \(A\) is an invertible operator on a finite-dimensional vector space, then \(m(A) = \nre{A^{-1}}^{-1}\).
	\end{lemma}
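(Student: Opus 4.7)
The plan is to use invertibility of $A$ to convert the infimum defining $m(A)$ into the reciprocal of an operator-norm expression for $A^{-1}$. (The identity to be established, namely $m(A) = \nre{A^{-1}}^{-1}$, requires reading $|\,\cdot\,|$ in the statement as the operator norm on $L(V)$ induced by the ambient norm on $V$; the determinant interpretation literally parsed from the statement fails already for $A = 2I$ on $\R^2$, so operator norm seems to be the intended meaning.) By positive homogeneity of the norm, I would first rewrite
\[ m(A) = \inf_{\nre{x}=1} \nre{Ax} = \inf_{x\ne 0} \frac{\nre{Ax}}{\nre{x}}. \]

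The key step is then the change of variables $y = Ax$, so that $x = A^{-1}y$; since $A$ is a bijection of $V\setminus\{0\}$ with itself, this does not change the index set of the infimum and yields
\[ m(A) = \inf_{y\ne 0} \frac{\nre{y}}{\nre{A^{-1}y}} = \Bigl(\sup_{y\ne 0} \frac{\nre{A^{-1}y}}{\nre{y}}\Bigr)^{-1} = \nre{A^{-1}}^{-1}. \]
The middle equality is the elementary identity $\inf f = (\sup f^{-1})^{-1}$ for a strictly positive function $f$ (positivity being guaranteed since $A^{-1}y \ne 0$ whenever $y \ne 0$), and the last equality is the definition of the operator norm of $A^{-1}$.

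The only ingredients are positive homogeneity of $\nre{\cdot}$, bijectivity of $A$ to license the substitution, and the reciprocation trick $\inf f = (\sup f^{-1})^{-1}$. Hence there is no genuine obstacle; the whole computation fits in the two displayed equations above, and the real content of the lemma is merely recording this standard duality between mini-norm and operator norm for inverses.
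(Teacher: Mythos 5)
Your computation is correct for the identity it actually establishes, namely $m(A)=\nre{A^{-1}}^{-1}$ with $\nre{\cdot}$ the operator norm: the substitution $y=Ax$ is licensed by bijectivity of $A$ on the finite dimensional space, and the reciprocation step is sound because $\nre{A^{-1}y}>0$ for every $y\ne 0$. There is nothing to compare it against, because the paper gives no proof at all --- it records the lemma as ``a known result from linear algebra'' and moves on --- and the change-of-variables argument you give is the standard one. You are also right to insist on the operator-norm reading: as literally stated, with $|A|$ the determinant, the lemma is false ($A=2I$ on $\R^2$ has $m(A)=2$ but $|A^{-1}|^{-1}=4$), so the statement is a misprint and yours is the only interpretation under which it is a true assertion. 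One caveat, which concerns the paper's use of the lemma rather than your proof of it: in Step 1 of the proof of Theorem C the lemma is invoked to conclude $m(T_u)>1$ from $\sigma(T_u)\subset\{\lambda:|\lambda|>1\}$, and the corrected identity $m(T_u)=\nre{T_u^{-1}}^{-1}$ does not deliver this for the inherited norm (only the spectral radius of $T_u^{-1}$ is guaranteed to be below $1$, not its norm); an adapted norm or a passage to an iterate is needed there, whereas the determinant reading --- which would give $m(T_u)=|\det T_u|>1$ --- is precisely the false one.
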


	The lemma above is a known result from linear algebra which we state for future reference. 

  The previous theorem highlights the importance of spectral decomposition, particularly the separation of the spectrum according to eigenvalue moduli. Through iterated applications of the Riesz theorem, we obtain a decomposition into invariant subspaces \(X = X_u \oplus X_c \oplus X_s\) with the following properties:
  The restrictions \(T_u = \restr{T}{X_u}\), \(T_c = \restr{T}{X_c}\), and \(T_s = \restr{T}{X_s}\) satisfy:
    \begin{align*}
        \sigma(T_u) &= \{\lambda \in \sigma(T) : |\lambda| > 1\}, \\
        \sigma(T_c) &= \{\lambda \in \sigma(T) : |\lambda| = 1\}, \\
        \sigma(T_s) &= \{\lambda \in \sigma(T) : |\lambda| < 1\}.
    \end{align*}
  
		The nomenclature reflects partial hyperbolic dynamics: \(X_s\) (stable), \(X_u\) (unstable), and \(X_c\) (central) correspond to the contraction, expansion, and neutral behavior of \(T\) respectively.

	\begin{proposition}\label{propositionA1}
		For any compact operator \(T\) on a Banach space \(X\), the support of every \(T\)-invariant probability \(\mu\) is contained in \(X_c\).
	\end{proposition}
  
  \begin{proof}
    Using the above decomposition, we will establish that \(\supp\mu \subseteq X_c\) for any \(T\)-invariant probability \(\mu\) by showing that for every \(x = x_u + x_c + x_s\) not in \(X_c\) there is \(V\in\mathcal{V}(x)\) with \(\mu(V) = 0\). 
		
		We will split the proof in two steps. First \(x_u\ne 0\) and second \(x_u =0\).

		\textit{Step 1}. Fix \(x = x_u + x_c + x_s\) with \(x_u\ne 0\). Let \(r = \nre{x_u}\). Consider the set \[W = \{ y \in X: d(y,X_c\oplus X_s)< r+\delta\}\] for some \(\delta >0\) to be specified latter. Clearly, \(W\) is an open set containing \(x\). Note that
		\begin{align*}
			d(y,X_c\oplus X_s) & = \inf \{ \nre{y-z} : z\in X_c\oplus X_s\} \\
			& = \inf \{ \nre{y_u + (y_c+y_s - z)} : z\in X_c\oplus X_s\} \\
			& = \inf \{ \nre{y_u  - w }: w\in X_c\oplus X_s \} \\
			& =  d(y_u,X_c\oplus X_s)
		\end{align*}
		hence \(y\in W\) if and only if \(y_u \in W\). Let \(m = m(T_u)\) be the mini-norm of \(T_u\). By the \Cref{lemma:mininorm}, we have that \(m>1\), furthermore for every \(y\in X\),
		\begin{equation}
			\label{eq:nome}
			m \leq \bigg\lVert T_u\bigg(\frac{y_u}{\nre{y_u}}\bigg)\bigg\rVert \implies \nre{m y_u} \leq \nre{T_u y_u}.
		\end{equation}
		Consequently, if \(y\in T^{-1} W\) then \(T_u y_u \in W\). \Cref{eq:nome} implies \(my\in W\). That is, \(T^{-1} W \subset \nicefrac{1}{m} W\).

		Since \(m>1\) the above inclusion yields \(T^{-1}(W)\subset \nicefrac{1}{m} W \subset W\). By the invariance of the measure \[\mu(W) = \mu(T^{-1} W) \leq \mu(\nicefrac{1}{m} W) \leq \mu(W),\]
		which gives \(\mu(W) = \mu(\nicefrac{1}{m}W)\). The intermediate value theorem allows us to choose \(\delta>0\) such that \[1<\frac{r+\delta}{r-\delta}<m.\] 
		This ensures \(x \in W\setminus\nicefrac{1}{m}\overline{W}\eqdef V\), leading to \(\mu(V) = 0\). Thus, we conclude \(x\notin \supp\mu\).

		\textit{Step 2}. Fix \(x = x_c+x_s\). Let \(2r=d(x,X_u\oplus X_c)\). Consider 
		\[W = \{ y \in X: d(y,X_u\oplus X_x)< r\}.\]  
		By \Cref{lemma:spectrumcontraction}, \(T_s^k\) is a contraction for sufficiently large \(k\in\N\). Hence, for sufficiently large \(m\), we have \(T_s^{km} x_s \in W\). Similarly to the previous step, \(y\in W\) if and only if \(y_s\in W\). %\(d(y,X_u\oplus X_c) = d(y_s, X_u\oplus X_c)\) 
		Those observations imply that if \(n=km\), we have \(x \in T^{-n}(W)\). Since \(T^{-n}(W)\) is an open set, there is a \(V\in\mathcal{V}(x)\) such that \(V\subset T^{-n}(W)\), \(V\cap W = \varnothing\).

		Note that \(W\subset T^{-n}(W)\). Indeed, for any \(y\in W\), \(\nre{T_s^n y_s} \leq \nre{y_s}\) which implies \(T_s^n y_s \in W\) and thus \(T^n y\in W\).

		By the above observations, \[\mu(V) \leq \mu(T^{-n}(W)\setminus W) = 0,\] which gives \(\mu(V) = 0\). This proves \(x\notin\supp\mu\).

	Hence, 	$\supp\mu \subset X_c$.

	\end{proof}

    \begin{corollary}\label{cor:fail_VP}
    The variational principles fails for compact operator with positive topological entropy.
    \end{corollary}
    \begin{proof}
    Proposition \ref{propositionA1} above implies that if $\mu$ is a $T$-invariant probability for the compact operator $T$, then $\supp \mu \subset X_c$. But $T_c$ is an isometry ($T_c$ is actually a finite direct sum of rotations on $X_c$), hence it is known that isometries have zero metric entropy, therefore the supreme of the metric entropy of invariant probabilities for $T$ is zero, which is different from the topological entropy of $T$ which is positive. Therefore the variational principle fails for $T$.
    \end{proof}

  \subsection{The proof of Theorem B}\label{subsec:thmB}

	We begin this subsection with the proof of \Cref{thmB}. Later, we prove in \Cref{prop:entropy_WSFS} that weighted shifts on sequence spaces have infinite topological entropy. Finally in \Cref{ex:SP-WeightedShift}, we provide an example of an operator on an \(F\)-space with the specification property.

	\begin{proof}[Proof of \Cref{thmB}]\mbox{}\\*
		\textit{Proof of i}. Suppose \(T\) has the operator specification property. Consider the sequence of sets \({(K_n)}_n\) as defined in \Cref{def:OSP}. We can then choose one nonzero periodic point \(x_1\) in some \(K_n\). Suppose this point is \(k\)-periodic. Note that \(T^k\) also satisfies the operator specification property for the same increasing sequence of sets \({(K_n)}_n\) (see {\cite[Proposition 8]{bartoll2016operators}}). Hence \(\restr{T^k}{K_n}\) has the specification property.

		Let \( \delta = 3^{-1}d(x_{1},0)\). By \Cref{def:SP}, there exists an integer \( N = N(\delta) \) such that for any \(n\)-tuple \( y= (y_{1},\ldots, y_{n})\in {\{0,x_1 \}}^n\), there exists a point \(x_{y} \in X\) satisfying
		\[
		d(T^{kj}x_{y} , y_{i}) < \delta \text{, for } j=i(N+1) \text{ and } i=0, \ldots, n-1.
		\]
		Hence the set \({\{ x_{y} \mid y \in \{0, x_1 \}}^{n} \}\) contains \(2^{n}\) elements and it is \(((n-1)(N+1) , \varepsilon)\)-separated, for the compact set \( E = \{0,x_1\}\cup {\{ x_y\mid y \in \{0, x_1 \}}^{n} \}\), for every \(0 < \varepsilon < \delta\) and \(n \in \N\). Hence,
		\[
		h_{top}(T^k) \geq h(T^k,E) \geq \limsup _{n\rightarrow \infty} \frac{n\log(2)}{(n-1)(N+1)} = \frac{\log(2)}{N+1} > 0.
		\]
		This implies \[h_{top}(T) = \frac{h_{top}(T^k)}{k}  > 0.\]

		\textit{Proof of ii}. Now suppose \(T\) has the specification property. Consider a \(k\)-periodic point \(x_{1}\) such that \(x_1 \neq 0\). Let \(\delta = \frac{d(x_{1} , 0)}{3}\) and let \(x_{n} = n \cdot x_{1}\) and \(L_{n} = \{ \alpha x_{1} \mid 0 \leq \alpha \leq n \}\).
		By the specification property, for any fixed \(m\in\N\), there exists \(N = N(\delta) \in\N\) such that for every \(n\)-tuple \(y = (y_{1},\ldots, y_{n}) \in {\{x_{1},\ldots, x_{m}\}}^n\), there exists a \(x_{y} \in X\) satisfying
		\[d(T^{kj}x_{y} , y_{i}) < \delta \text{, for } j=i(N+1) \text{ and } i=0,\ldots, n-1.\]
		Consequently, the set \({\{ x_{y} \mid y \in \{ x_{1},\ldots, x_{m} \}}^{n}\}\) has \(m^{n}\) elements and it is \(((n-1)(N+1) , \varepsilon)\)-separated for every \(0 < \varepsilon < \delta\) and \(n \in \N\), hence
		\[
		h_{top}(T^k) \geq \limsup_{n\rightarrow \infty} \frac{n\log(m)}{(n-1)(N+1)} = \frac{\log(m)}{N+1}
		\]
		Since \(m\) is arbitrary, we conclude that \(h_{top}(T^k) = \infty\), and hence, \(h_{top}(T) = \infty\).
	\end{proof}

	The following proposition follows closely {\cite[Theorem 2]{bartoll2014cantor}}. 
         
	\begin{proposition}\label{prop:entropy_WSFS}
		Let \(X\) be a \(F\)-space with an unconditional basis \(( e_{n})_n\). Suppose the sequence \(w = (w_n)_n\) satisfies
		\[ \sum_{n=1}^\infty {{\bigg(\prod_{i=1}^n w_n \bigg)}}^{-1} e_n \in X,\]
		Then the topological entropy of the weighted shift \(B_w\) is infinite, i.e., \[h_{top}B_w = \infty.\]
	\end{proposition}
	\begin{proof}
		For a given \(N\in\N\), define the map \(\Phi_{N} : {\{ 0, \ldots, N-1 \}}^{\N} \rightarrow X\) by
		\[ \Phi_{N} [{(x_{n})}_{n}] = \sum_{n=1}^\infty \left( \prod_{i=1}^{n} w_{i}^{-1} \right)x_n e_n .\]
		Clearly, when \({\{0, \ldots, N-1\}}^{\N}\) is equipped with the product topology, \(\Phi_N\) is continuous, injective, and satisfies \(\Phi_{N} \circ \sigma_{N} = B_{w} \circ \Phi_{N}\), where \(\sigma_N\) is the Bernoulli shift on \(N\) symbols. In particular, \(\Phi_{N}\) is a homeomorphism onto its image. Consequently if \(K_{N} = \Phi_{N}({\{ 0, \ldots, N-1 \}}^{\N})\), \[h_{top}(B_w) \geq h_{top}(\restr{B_{w}}{K_{N}}) = h_{top}(\sigma_{N}).\]
		Since it is well known that \(h_{top}(\sigma_N) = \log N\), and as \(N\) is arbitrary, we conclude that \(h_{top}(B_w) = \infty\).
	\end{proof}

	A final remark related to specification is that, in some cases---particularly for weighted shifts on \(F\)-spaces---it is possible to define an \(F\)-norm such that the operator has the specification property. This is the content of the next example.

	\begin{example}\label{ex:SP-WeightedShift}
		Let \(X\) be an \(F\)-space of sequences with an unconditional basis \({(e_n)}_n\), and \(B_w\) a backwards weighted shift on \(X\) with weight sequence \(w={(w_n)}_n\) satisfying \[ \sum_{n=1}^\infty {\bigg(\prod_{i=1}^n w_n \bigg)}^{-1} e_n \in X.\]
		Define the function
		\[ \nre{x} = \sum_{i=1}^\infty \frac{\min\{1,\nre{\pi_i x}_0\}}{2^{i}}, \textrm{ for } x \in X \]
		where \(\nre{\cdot}_0\) denotes the initial \(F\)-norm on \(X\) and \(\pi_i(x_n)_n = (x_1,\dots, x_i,0,0\dots)\).
		Now, given \(\varepsilon >0\), choose \(N \in \N\) such that \(2^{-N} < \varepsilon\). For \(x^{i} = {(x^{i}_{n})}_{n} \in X\), for \(i=1,\ldots,s\), assume we have indices 
        \[
        0 \leq a_{1} \leq b_{1} < \cdots < a_{s} \leq b_{s},\quad\textrm{ with } a_{i+1} - b_{i} \geq N. 
        \]
        Define \(z = {(z_{n})}_{n}\) by
		\[z_{n} = \left\{
		\begin{array}{ll}
			x^{i}_{n} & \mbox{if \(a_{i} \leq n < b_{i}+N\)}; \\
			0 & \mbox{otherwise}.
		\end{array} \right.\]
		%Clareza no prox par.
		Next, consider the forward shift \(F_w\) on \(X\) and define 
        \[
        \xi = \sum _{k=0}^{\infty} F_{w}^{k(b_{s}+N)} z.
        \] 
        Then \(\xi\) is \((b_{s}+N)\)-periodic and, for every \(a_{i} \leq n \leq b_{i}\), the difference \(B_{w}^{n} \xi - B_{w}^{n} x^{i}\) has the first \(n\) coordinates equal to zero. Consequently, for \(a_{i} \leq n \leq b_{i}\), we obtain
		\[
		\| B_{w}^{n} \xi - B_{w}^{n} x^{i} \| = \sum_{n=N+1}^{\infty} \frac{\min \{ 1, \| \pi _{n} ( B_{w}^{n} \xi - B_{w}^{n} x^{i}) \|_{0} \}}{2^n} 
		< \sum_{n=N+1}^{\infty} 2^{-n} = 2^{-N} < \varepsilon.
		\]
	\end{example}

	\section*{Acknowledgments}

	The authors would like to thank Alfred Peris and Mayara Braz Antunes for useful comments concerning the topics of this work.

    P.L. was partially supported by Conselho Nacional de Desenvolvimento Cient\' ifico e Tecnol\'ogico (CNPq) (grant 140036/2022{-}9).
    
    F.C.S. was partially supported by the Coordena\c{c}\~ao de Aperfei\c{c}oamento de Pessoal de N\'ivel Superior do Brasil (CAPES) (grant 682852/2022{-}00). 
    
    R.V. was partially supported by Conselho Nacional de Desenvolvimento Cient\'ifico e Tecnol\'ogico (CNPq) (grants 313947/2020{-}1 and 314978/2023{-}2), and partially supported by Funda\c{c}\~ao de Amparo \`a Pesquisa do Estado de S\~ao Paulo (FAPESP) (grants 17/06463{-}3 and 18/13481{-}0).

	\bibliography{main.bib}

\end{document}